\documentclass[english,fleqn,11pt]{article}
\usepackage[utf8]{inputenc}
 \usepackage{hyperref}
\usepackage{a4wide}
 \usepackage{amsmath,amsthm,amscd, amssymb}
 \usepackage{mathtools}
 \usepackage{xcolor}
  \usepackage{graphicx}
 \usepackage{enumitem}
\usepackage{fancyhdr}
 \usepackage{physics} 				
\newcommand{\bZ}{{\mathbb{Z}}}
\newcommand{\bz}{{\mathbf{z}}}
 \DeclareFontEncoding{LS1}{}{}
    \DeclareFontSubstitution{LS1}{stix2}{m}{n}
              \DeclareSymbolFont{symbols2}{LS1}{stixfrak} {m} {n}
		\DeclareMathSymbol{\operp}{\mathbin}{symbols2}{"A8}

 \newtheorem{thm}{Theorem}[section]
	\newtheorem*{thm*}{Theorem}
	\newtheorem*{corr*}{Corollary}
	
	\newtheorem*{lemm*}{Lemma}
	\newtheorem{prop}[thm]{Proposition}
	\newtheorem*{prop*}{Proposition}

\theoremstyle{definition}
	
	\newtheorem*{dfn*}{Definition}
	
	\newtheorem*{exmple*}{Example}

	\newtheorem*{conj*}{Conjecture}

\theoremstyle{remark}

       	\newtheorem{rmk}[thm]{\textit{Remark}}
\def\lset{\{}  	
\def\rset{\}}  	
	\def\set#1{\lset#1\rset} 
	\def\sett#1#2{\lset #1 \mid  #2 \rset}  
	\renewcommand\emptyset{\varnothing} 	
\newcommand\stab[2]{{#1}_{#2}}	
\def\id{\text{\rm id}} 
\def\chr#1{\operatorname{char}({#1})}

\begin{document}

 \title{On the center of the Clifford Algebra}
\author{Chris Peters \& Hans Sterk}
\date{October  2026}
\maketitle
\begin{abstract}
The center of a Clifford algebra associated to a non-degenerate quadratic form over a field is described in the existing literature, but an elementary proof of the result seems lacking. The aim of this paper is to provide such an elementary proof.
  \end{abstract} 
\section{Introduction}  Let $R$ be a commutative ring with unit $1$.   Recall that a quadratic form on an $R$-module $V$
is a function $q:V\to R$ with  $q(rx)= r^2 q(x)$ and such that the form 
 $b_q(x,y)= q(x+y)- q(x) -q(y)$ is a symmetric bilinear form on $V$, also called its polar form. 
 
 The Clifford algebra $\mathsf C(q)$ associated to $q$ can be defined for quadratic forms over any $R$-module (see e.g. \cite[Ch. 2]{quadformen}), but we
 shall be interested in  the case of  finite-dimensional   vector spaces $(V,q)$ over any field $k$ with $q$ non-degenerate.
 These will be called quadratic vector spaces over $k$.

 In that situation and if moreover $\chr k\not=2$, there is a  $q$-orthogonal basis for $V$
 making   $b_q$   diagonal. The form $b_q$ (and by convention also $q$) is non-degenerate 
  if  and only if all diagonal elements are non-zero.
 In case $\chr k =2$ any non-degenerate quadratic form on $V$   has even rank $2n$  and in some  symplectic basis 
such a form can be written    $q=\sum_{i=1}^n  x_i x_{n+i} + \sum_{j=1}^{2n}  a_j x_j^2,$ for some $a_j\in k$.   Its Arf invariant  
  $$
  \mathsf{a}(q):=\sum a_ia_{i+n} \bmod   \wp  (k)   ,  \quad \wp(a)=a+a^2,  a\in k,
$$
  determines the isometry class of $q$.

 If $k$ is a field,   $\mathsf C(q)$
 is the quotient of the tensor  algebra  $T(V)$  on $V$
by  the two-sided  ideal  $I(q)$ generated by $x\otimes x - q(x)\cdot \bf 1$, $x\in V$. The grading on $T(V)$ induces a $\bZ/2\bZ$-grading
on the Clifford algebra so that $\mathsf C(q)=\mathsf C(q^0)\oplus \mathsf C(q^1)$, and since  $\mathsf C^i(q)\cdot \mathsf C^j(q)= 
\mathsf C^{i+j}(q)$, the submodule $\mathsf C^0(q)$ is in fact a sub-algebra.

Usually the computation of the centers of the Clifford algebra and the even Clifford algebra are performed
by considering first the cases where $n=1$, respectively $n=2$, and then using that 
$\mathsf C(q_1 \operp q_2)= \mathsf  C (q_1) \widehat{\otimes} \mathsf C(q_2)$, where $ \widehat{\otimes} $ is the graded tensor product.
See for instance \cite[Ch 9.2,9.3]{scharlau}. The result is as follows.
%
%

\begin{prop}[$\chr k\not =2$]   \label{prop1} Let $(V,q)$ be  a quadratic   vector  space over  $k$  with
 an orthogonal basis $\set{e_1,\dots,e_n}$. Setting
 $e= e_1\cdots e_n\in \mathsf{C}(q)$, we have:
\begin{enumerate} \renewcommand{\labelenumi}{\alph{enumi})}
\item  $\text{ The center of }\mathsf{C}(q) = \begin{cases}
k & \text{ if  $n$ is even}\\
k+k\cdot e& \text{ if  $n$ is odd}\\
\end{cases}
$
\item 
$ \text{The center of }\mathsf{C}^0(q) = \begin{cases}
k & \text{ if  $n$ is odd}\\
k+k\cdot e& \text{ if  $n$ is even}\\
\end{cases}
$
\end{enumerate}

\end{prop}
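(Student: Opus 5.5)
The plan is to argue directly with the monomial basis of $\mathsf C(q)$ rather than through graded tensor products. For a subset $S=\set{i_1<\dots<i_r}\subseteq\set{1,\dots,n}$ put $e_S=e_{i_1}\cdots e_{i_r}$, with $e_\emptyset=\mathbf 1$. The first step is to recall (or quickly verify, by a dimension count) that the $2^n$ elements $e_S$ form a $k$-basis of $\mathsf C(q)$, and that the $e_S$ with $|S|$ even form a basis of $\mathsf C^0(q)$. The defining relations give $e_i^2=q(e_i)=:a_i\neq0$, using non-degeneracy, and $e_ie_j=-e_je_i$ for $i\neq j$, using orthogonality and $b_q(e_i,e_j)=0$.

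The key computation is the commutation rule between a generator and a monomial:
$$e_i e_S = (-1)^{|S|-\epsilon_i(S)} e_S e_i, \qquad \epsilon_i(S)=\begin{cases}1& i\in S\\ 0 & i\notin S.\end{cases}$$
To get it, move $e_i$ past each factor, picking up a sign for every factor $e_j$ with $j\neq i$. Since each $e_S$ is invertible ($e_S^{-1}$ is a nonzero scalar multiple of $e_S$), conjugation by $e_i$ sends $e_S\mapsto \pm e_S$. So for $z=\sum_S c_Se_S$, the element $z$ commutes with $e_i$ iff $c_S=0$ for every $S$ with $|S|-\epsilon_i(S)$ odd. The point is that the distinct $e_S$ are linearly independent, so the signs can be compared coefficient by coefficient; this is also where $\chr k\neq 2$ enters, since $c_S=-c_S$ must force $c_S=0$.

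For a) I will use that $z$ is central iff it commutes with all $e_i$, so $c_S\neq0$ requires $|S|-\epsilon_i(S)$ even for all $i$.
\begin{itemize}
\item If $S\neq\emptyset,\set{1,\dots,n}$, choose $i\in S$ and $j\notin S$. Then $|S|-1$ and $|S|$ would both have to be even, which is impossible.
\item $S=\emptyset$ always works.
\item $S$ equal to everything requires $n-1$ even, i.e.\ $n$ odd.
\end{itemize}
This gives $k$ for $n$ even and $k+k\cdot e$ for $n$ odd.

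For b), $\mathsf C^0(q)$ is generated as an algebra by the products $e_ie_j$ with $i<j$. So $z\in\mathsf C^0(q)$ is central there iff it commutes with all $e_ie_j$. By the rule above, conjugating $e_S$ by $e_ie_j$ multiplies it by $(-1)^{\epsilon_i(S)+\epsilon_j(S)}$, because $|S|$ is even. Hence $c_S\neq0$ forces $\epsilon_i(S)=\epsilon_j(S)$ for all $i\neq j$. This requires $n\ge2$; for $n=1$, $\mathsf C^0=k$ trivially. So $S=\emptyset$ or $S=\set{1,\dots,n}$, and the latter occurs in $\mathsf C^0(q)$ only when $n$ is even. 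This gives the stated answer.

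The only real obstacle is the basis statement, namely linear independence of the $e_S$, i.e.\ $\dim\mathsf C(q)=2^n$. Spanning is immediate from the relations. For independence I would construct an explicit representation, for instance on the exterior algebra $\Lambda V$ with $x$ acting by $\iota$-twisted wedge $x\wedge(\cdot)+\tfrac12\iota_{b_q(x,\cdot)}$. In it, the images of the $e_S$ applied to $1$ are the independent vectors $e_{i_1}\wedge\dots\wedge e_{i_r}$. If the paper allows citing the standard structure theorem from \cite{quadformen}, I would simply invoke it.
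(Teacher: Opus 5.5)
Your proposal is correct and follows essentially the paper's argument: the sign rule for moving $e_i$ or $e_ie_j$ past a monomial $e_S$, invertibility of the $e_i$ coming from non-degeneracy, and coefficient comparison in the monomial basis using characteristic $\neq 2$. The differences are only organizational. The paper computes the centralizer of $\mathsf{C}^0(q)$ in $\mathsf{C}(q)$ once and derives both a) and b) from it via $e_ie=(-1)^{n-1}ee_i$, whereas you handle a) directly with the single generators $e_i$. You also make explicit, via the $\Lambda V$ representation, the linear independence of the $e_S$ that the paper uses tacitly.
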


\begin{prop}[$\chr k  =2$]   \label{prop2}   Let $(V,q)$ be a quadratic vector space over $k$, and let  $\set{e_1,\dots,e_{2n}}$  be a  symplectic basis such that
$q=\sum_{j=1}^n  x_i x_{n+i} + \sum_{j=1}^{2n}  a_j x_j^2$. 
Let $\bz = e_1e_{n+1} +\cdots + e_n e_{2n} \in \mathsf{C}^0(q)$. Then

\begin{enumerate}\renewcommand{\labelenumi}{\alph{enumi})}
\item  the center of $\mathsf{C}(q)$ is $k$,
\item  the center of $\mathsf{C}^0(q)$ equals $k+ k\bz$,
\item $\bz^2+\bz  \equiv \mathsf{a}(q)  \bmod \wp(k)$, where we recall that  $\mathsf{a}(q) \bmod \wp(k)$ is the Arf invariant of $q$.
\end{enumerate}
\end{prop}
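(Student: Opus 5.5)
The plan is to prove Proposition~\ref{prop2} directly, using the monomial basis of $\mathsf C(q)$ rather than the graded tensor product. For $I=\{i_1<\dots<i_k\}\subset\{1,\dots,2n\}$ put $e_I=e_{i_1}\cdots e_{i_k}$; these $2^{2n}$ monomials form a basis of $\mathsf C(q)$, and those with $|I|$ even span $\mathsf C^0(q)$. In characteristic $2$ we have $e_j^2=a_j$. We also have $e_ie_j+e_je_i=b_q(e_i,e_j)$, which equals $1$ if $\{i,j\}=\{m,m+n\}$ and $0$ otherwise. So each $e_j$ anticommutes, equivalently commutes, with every $e_i$ except its partner $e_{j'}$, where $j'=j\pm n$. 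The basic tool is the commutator
$$[e_j,x]=e_jx+xe_j .$$
For $x=e_I$ I would compute $[e_j,e_I]$ by moving $e_j$ across $e_I$. Only the partner $e_{j'}$ contributes, giving a correction term $e_{I\setminus\{j'\}}$ with coefficient $1$ when $j'\in I$, up to reordering corrections which are lower-order words. Precisely, I will use the filtration by word length. The symbol of $[e_j,e_I]$ is $e_{I\setminus\{j'\}}$ if $j'\in I$, and $0$ otherwise. All the remaining terms have length at most $|I|-1$ and the same parity.

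For a) and b), let $x=\sum c_Ie_I$ be central (in $\mathsf C(q)$, respectively in $\mathsf C^0(q)$). Choose $I$ of maximal length $m$ with $c_I\neq 0$. In case a), pick $j'\in I$ and take $j$ to be its partner. Then the top-degree part of $[e_j,x]$ is $\sum_{|I|=m,\,j'\in I}c_Ie_{I\setminus\{j'\}}$. These monomials are distinct for distinct $I$, so this part is nonzero, and no lower term of $x$ can cancel it. This contradicts centrality unless $m=0$, so the center is $k$.

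In case b) only even elements are available, so I would test centrality against $e_je_l$ instead. Writing $[e_je_l,x]=e_j[e_l,x]+[e_j,x]e_l$ gives a symbol computation in which $e_I$ commutes with $e_je_l$ to top order exactly when $I$ contains an even number of the relevant partners. Then $\mathbf z$ is central: a direct check shows that $[e_m,e_me_{m+n}]=a_m\cdot 2\cdots$ vanishes and that $[e_{m+n},e_me_{m+n}]=e_m e_{m+n}+e_{m+n}e_m$ is not $0$ individually but cancels in pairs. Concretely, $e_j$ anticommutes with $\mathbf z$ up to the term $e_j$, so that $e_j\mathbf z+\mathbf z e_j=e_j$. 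Hence $e_je_l\mathbf z=e_j(\mathbf z e_l+e_l)=(\mathbf ze_j+e_j)e_l+e_je_l=\mathbf z e_je_l$.

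Conversely, suppose $x$ is even and central in $\mathsf C^0(q)$. Then $y=[e_j,x]$ satisfies $ye_l=e_ly$ modulo the appropriate identities. I would instead reduce to a) as follows. The map $x\mapsto e_jx+xe_j$ sends a central element of $\mathsf C^0(q)$ to an odd element $y_j$ with $y_je_l=e_jy_l$-type relations. The cleanest route, I expect, is to show that $x$ and $\mathbf z$ have the same commutator with each $e_j$ up to a scalar. Since $[e_j,\mathbf z]=e_j$, I would show $[e_j,x]=\lambda e_j$ for a single $\lambda\in k$. Then $x-\lambda\mathbf z$ commutes with all $e_j$, so it lies in $k$ by a).

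The main obstacle is this step. One has to show that $[e_j,x]\,e_l=e_j\,[e_l,x]$ for all $j,l$, which follows from $[e_je_l,x]=0$ together with $e_j$ commuting with $x$ up to $[e_j,x]$. From this identity, the symbol/leading-term argument, or multiplying by $e_{j'}$ and using $e_{j'}e_j+e_je_{j'}=1$, should force $[e_j,x]\in ke_j$ with a common scalar. This is where I would spend the effort.

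For c), I would expand $\mathbf z^2=\sum_m (e_me_{m+n})^2+\sum_{m\ne p}e_me_{m+n}e_pe_{p+n}$. The cross terms for $m\neq p$ commute and cancel in pairs in characteristic $2$. For each $m$, writing $u=e_m$ and $v=e_{m+n}$, we get $uvuv=u(1+uv)v=uv+a_ma_{m+n}$. Summing gives $\mathbf z^2=\mathbf z+\sum a_ma_{m+n}$, so $\mathbf z^2+\mathbf z=\mathsf a(q)$. This holds exactly, and the statement ``mod $\wp(k)$'' reflects that replacing $\mathbf z$ by another generator $\mathbf z+c$ changes the value by $\wp(c)$.
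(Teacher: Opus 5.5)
Your parts a) and c) are correct. In fact $[e_j,e_I]=e_{I\setminus\{j'\}}$ (for $j'\in I$, and $0$ otherwise) holds exactly, with no lower-order corrections, so a) is immediate, and c) is the same computation as in the paper. One sentence in b) is wrong: $[e_m,e_me_{m+n}]$ does not vanish, since $[e_m,e_me_{m+n}]=e_m$ and $[e_{m+n},e_me_{m+n}]=e_{m+n}$. However, the identity $e_j\bz+\bz e_j=e_j$ that you actually use is right, and it shows that $\bz$ centralizes $\mathsf C^0(q)$.

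\textbf{The gap.} It is the reverse inclusion $Z(\mathsf C^0(q))\subseteq k+k\bz$. You reduce it to showing $[e_j,x]=\lambda e_j$ for a common $\lambda\in k$, but you do not prove this. Moreover, a pure leading-term (symbol) argument cannot prove it. The associated graded algebra of $\mathsf C(q)$ for the length filtration is the exterior algebra $\Lambda V$, which is commutative in characteristic $2$. The top-degree part of your identity $[e_j,x]e_l=e_j[e_l,x]$ is satisfied not only by $\bz$ but by every $\sum_{|S|=r}\prod_{m\in S}e_me_{m+n}$ with $S\subseteq\{1,\dots,n\}$. For $n=2$, for instance, $x=e_1e_2e_3e_4$ passes the top-order test against every $e_je_l$. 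What excludes it is the lower-order term $e_1e_2$ in $[e_1e_2e_3e_4,e_1e_2]=a_1e_2e_3+a_2e_1e_4+e_1e_2$, which comes from $e_me_{m+n}+e_{m+n}e_m=1$. Handling exactly such terms is what the paper's Steps 3 and 4 do.

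\textbf{How your other hint closes it.} The hint of multiplying by $e_{j'}$ does work, but it needs one more step than you indicate.
Put $y_j=[e_j,x]$ and fix one $j$ with partner $j'$. Multiply $y_je_l=e_jy_l$ on the left by $e_{j'}$, and $y_{j'}e_l=e_{j'}y_l$ on the left by $e_j$. Adding gives $y_l=we_l$ for all $l$, with $w=e_{j'}y_j+e_jy_{j'}$. This $w$ is not visibly a scalar.
Substitute $e_lx=xe_l+we_l$ (for all $l$) into $e_le_px=xe_le_p$. This yields $(we_l+e_lw)e_p=0$ for all $p$. Using this for $p$ and its partner $p'$, together with $e_pe_{p'}+e_{p'}e_p=1$, gives $we_l=e_lw$.
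So $w$ is central in $\mathsf C(q)$, hence $w=\lambda\in k$ by a). Then $x-\lambda\bz$ commutes with every $e_l$, so it lies in $k$ by a).

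\textbf{Comparison with the paper.} Completed this way, your route genuinely differs from the paper's. You prove a) directly and derive b) from it by a short algebraic argument. The paper instead determines the centralizer of $\mathsf C^0(q)$ by degree-by-degree coefficient comparisons and deduces a) afterwards.
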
 

The goal of the present  note is to give a short  elementary  proof of these  results avoiding using the graded tensor product.

\begin{rmk} The results above have rough analogs  for free $R$-modules of finite rank (again $R$ a unital commutative ring) equipped with a unimodular 
bilinear form,  and in case  $2\not\in R^\times$, for quadratic forms $\sum_{i<j}q_{ij} x_ix_j$ that are regular or semi-regular.   See \cite[7.10]{quadformen} for details.
\end{rmk}

 \section{Proof of Proposition \ref{prop1}}
 It suffices to show that
\[
\stab{\mathsf{C}(q)}{\mathsf{C}^0(q)}= \sett{x \in \mathsf{C}(q)}{ xy =yx \text{ for all } y\in \mathsf{C}^0(q)}= k+ ke.
\] 
 Indeed, noting that $e\in \mathsf{C}^0(q)$ precisely if $n$ is even, 
we obtain from $\stab{\mathsf{C}(q)}{\mathsf{C}^0(q)}=k+ke$ 
that the center of $\mathsf{C}^0 (q)$ is $k+ke$ resp.\ $k$ if $n$ is even resp.\ odd. Since $e_i e = (-1)^{n-1} e e_i$ we conclude that the center of $\mathsf{C}(q)$ is $k+ke$ resp.\ $k$ if $n$ is odd resp.\ even.

 To proceed,
let  $u\in\stab{\mathsf{C}(q)}{\mathsf{C}^0(q)}$. Since $\mathsf {C}^0 (q)$ is generated by the elements $e_i e_j$, $i<j$, it suffices to analyze the equations $e_i e_j u = e_i e_j u$. Now note that, writing $J= \{ j_1 , \ldots , j_r\}\subset \{ 1, \ldots , n\}$ and $e_J = e_{j_1} \cdots e_{j_r}$, we have $e_i e_j e_J = \pm e_J e_i e_j$ where the sign is negative if and only if precisely one of the two indices $i,j$ belongs to $J$. 
Given $i\neq j$, write  $u = \sum_J a_J e_J + \sum_K b_K e_K$ where the $J$  either contain both  $i,j$  or neither of them, 
and the $K$ are such that precisely one of $i,j$ is in $K$.
By assumption  $e_i e_j \cdot u = u  \cdot e_i e_j$. On the other hand,
$$
\begin{array}{ll}
e_i e_j \cdot u = e_ie_j(\sum_J a_J e_J + \sum_K b_K e_K) & = (\sum_J a_J e_J - \sum_K b_K e_K)e_i e_j\\
& = u\cdot e_i e_j - 2(\sum_K b_K e_K)e_i e_j.
\end{array}
$$
It follows that  $v=(\sum_K b_K e_K)e_i e_j =0$ (since $\text{char} \neq 2$). 
We claim that  the    $k$-linear endomorphisms $E_\alpha: \mathsf{C}(q)\to \mathsf{C}(q)$ given by right  multiplication with $e_\alpha$
are  isomorphisms. This follows since $E_\alpha^2= q(e_\alpha)\cdot \id$ is invertible ($q(e_\alpha)\neq 0$ since $q$ is non-degenerate). 
Hence $E_iE_j$ is invertible, and so, because  $v\in \ker(E_iE_j)=\set{0}$,  all coefficients $b_K$ of $v$ vanish. In other words,
 all coefficients of terms of $u$ that contain exactly one $i$ or $j$ are $0$. Since $i,j$ are arbitrary, it follows that all coefficients of $u$ are $0$ except possibly the constant term and the term which is a multiple of $e=e_1 e_2 \cdots e_n$. 

 \section{Proof of Proposition \ref{prop2}}
In this case in the Clifford algebra one has the relations
\[
\begin{array}{rcll}
e_i^2  & = & a_i ,                               &  i =1,\dots,2n ,\\
e_i e_j & = & e_je_i,                             &  i,j =1,\dots,n,  \text{ or }  i,j =n+1,\dots,2n, \\
e_i e_{n+j} + e_{n+j} e_i&=&\delta_{ij}   , &    i,j =1,\dots,n .
\end{array} 
\]
\textbf{\emph{Proof of a) and b).}}
Again it suffices to show that $\stab{\mathsf{C}(q)}{{\mathsf{C}^0 (q)}} = k + k\bz$, as $\bz \in {\mathsf{C}^0 (q)}$ and $\bz \cdot e_i = e_i \cdot \bz + e_i$, $i=1, \ldots , n$, then imply that the center of $\mathsf{C}(q)$ is $k$ and the center of $\mathsf{C}^0 (q)=k+k\bz$. 

Let $x\in \stab{\mathsf{C}(q)}{\mathsf{C}^0(q)}$. Since the field $k$ is in the center we may suppose $x= \sum \alpha_{IJ} e_I e_J$ where $I=\set{i_1,\dots,i_k}\subset \set{1,\dots,n}$ with  $i_1< \cdots<i_k$ and 
 $J=\set{j_1,\dots,j_\ell}\subset \set{n+1,\dots,2n}$ with $j_1 < \cdots < j_s$ are not both empty, 
 and where $e_I e_J= e_{i_1}\cdots e_{i_k} e_{j_1}\cdots e_{j_\ell}$.
 
 The proof relies on analyzing various commutators with suitably chosen terms $e_i e_j$ ($1\leq i,j \leq 2n$) of degree 2. The computations carried out below show that a commutator $\left[ e_I e_J, e_i e_j\right]$ may be non-zero with one or more terms of degree $0$ or $2$ less than the degree of $e_I e_J$. These extra terms are used to analyze the coefficients $\alpha_{IJ}$ and to conclude that $x$ reduces to a multiple of $\bz$. 
 
 In the first step we show that $x$ only has even degree terms and that in every nonzero term $\alpha_{IJ}e_I e_J$ with $I=\{ i_1 , \ldots , i_r\}$ we have $J=\{ n+i_1 , \ldots , n+i_r\}$. Next we show that the degree 2 part of $x$ is a scalar multiple of $\bz$. Finally we show inductively that $x$ has no degree $2m$ part for $m=2,3\ldots$. 

\medskip
 \noindent
\textbf{Step 1: The element $x$ has only even degree terms}.\\
Consider a pair of indices $i\in \{ 1, \ldots , n\}$, $n+i\in \{ n+1, \ldots , 2n\}$. For a term $e_I e_J$ we distinguish four cases:
\begin{itemize}
    \item 
 $i\not\in I$, $n+i \not\in J$.  In this case $e_i$ and $e_{n+i}$ commute with $e_I$ and $e_J$ so that 
 $$
 [e_Ie_J, e_i e_{n+i}] =e_Ie_J \cdot e_i e_{n+i} -e_i e_{n+i}\cdot e_I e_J =0.
 $$
 \item 
 If $i=i_k\in I=\{ i_1, i_2, \ldots , i_r\}$ and $n+i=j_m\in J=\{ j_1, j_2, \ldots , j_s\}$, then 
 $$
 [e_Ie_J, e_i e_{n+i}] =0,
 $$
 since
 $$
 \begin{array}{ll}
 e_Ie_J\cdot e_{i_k} e_{j_m}& = e_{i_1}\cdots e_{i_r} e_{j_1} \cdots e_{j_s} \cdot e_{i_k} e_{j_m}\\
 & = e_{i_k} e_{j_m}\cdot e_{i_1} \cdots\widehat{e}_{i_k} \cdots e_{i_r} \cdot e_{j_1}\cdots \widehat{e}_{j_m}\cdots e_{j_s}\cdot e_{i_k} e_{j_m}\\
 & = e_{i_k} e_{j_m}\cdot e_{i_1} \cdots e_{i_r} \cdot e_{j_1}\cdots e_{j_s}\\
 & = e_{i_k} e_{j_m}\cdot e_I e_J.
 \end{array}
 $$
 \item 
 If $i=i_k\in I=\{ i_1 , i_2, \ldots , i_r\}$ and $n+i\not\in J=\{ j_1 , \ldots , j_s\}$, we have
 $$
 \left[ e_I e_J, e_{i_k} e_{n+ i_k}\right] = e_I e_J .
 $$
 This can be seen as follows. Since $e_J$ commutes with $e_{i_k} e_{n+ i_k}$ and 
 $$
 e_I \cdot e_{i_k} e_{n+ i_k} = e_{i_k} \cdot e_{i_1}  \cdots (e_{n+i_k} e_{i_k} +1) \cdots e_{i_r} = e_{i_k} e_{n+i_k} \cdot e_I + e_I 
 $$
 we have 
 $$
\left[ e_I e_J, e_{i_k} e_{n+ i_k}\right]= \left[ e_I, e_{i_k} e_{n+i_k}\right] e_J = e_I e_J.
 $$
\item 
Similarly, if $i\not\in I = \{ i_1 , \ldots , i_r\}$ and $n+i=j_m\in J=\{ j_1, j_2, \ldots , j_s\}$, then $e_I$ commutes with $e_i e_{n+i}$ and $e_J \cdot e_i e_{j_m}=e_{j_1} \cdots (e_i e_{j_m} +1) \cdots e_{j_s} e_{j_m} = e_i e_{j_m}\cdot e_J + e_J$ so that 
$$
[e_I e_J, e_i e_{n+i}] = e_I \left[ e_J, e_i e_{n+i}\right] = e_I e_J.
$$
\end{itemize}
So for $x= \sum \alpha_{IJ} e_I e_J$ we have:
 $$
\left[ \sum \alpha_{IJ} e_Ie_J,e_i e_{n+i}\right]=    \sum_{i\in I, n+i\not\in J}  \alpha_{IJ} e_I e_J  + \sum_{i\not\in I, n+i\in J}  \alpha_{IJ} e_I e_J .
 $$ 
 Consequently, for $i\in I$,  $n+i\not\in J$, and for $i\not\in I$ and $n+i\in J$ the coefficients $\alpha_{ I J}$ must be zero.
We conclude that every non-zero term $\alpha_{IJ} e_Ie_J$ in $x$ satisfies $i\in I \Leftrightarrow n+i\in J$. In particular, $|I|=|J|$ and $x$ does not contain odd degree terms. 

\medskip
\noindent
\textbf{Step 2: The degree 2 part of $x$ is a multiple of $\bz$}.\\
Write $x$ in the form 
$$
x=\sum_I \alpha_I e_I e_{I+n}  \text{ with  } I: i_1 < \cdots < i_k , \, I+n: n+i_1 < \cdots < n+i_k.
$$
Note that the degree 2 part is $x_2 = \alpha_1 e_1 e_{n+1} + \alpha_2 e_2 e_{n+2} + \cdots + \alpha_n e_n e_{2n}$. We argue that all coefficients of the degree 2 terms are equal. To do so, we consider commutators of the form $\left[ x, e_k e_{n+\ell}\right]$ with $k,\ell \in \{ 1, \ldots , n\}$ and $k\neq \ell$. 

 Observe that $e_k e_{n+\ell}$ commutes with every $e_i e_{n+i}$, $i\neq k , \ell$.  For $i=k$ and $i=\ell$ we have, respectively,
$$
\begin{array}{l}
e_k e_{n+k}\cdot e_k e_{n+\ell}  = e_k e_{n+\ell} \cdot (e_k e_{n+k}+1) 
 = e_k e_{n+\ell} \cdot e_k e_{n+k} + e_k e_{n+\ell}, \\
 e_{\ell} e_{n+\ell} \cdot e_k e_{n+\ell} = e_k (e_{n+\ell} e_{\ell} +1) e_{n+\ell} = e_k e_{n+\ell}\cdot e_{\ell} e_{n+\ell} + e_k e_{n+\ell}.
\end{array}
$$
Hence 
$$
\left[ e_i e_{n+i}, e_k e_{n+\ell}\right] = 
\left\{ 
\begin{array}{ll}
0 & \text{if } i\neq k, \ell\\
e_k e_{n+\ell} & \text{if } i=k \text{ or } i=\ell.
\end{array}
\right.
$$
So taking the commutator of $x_2$ with $e_k e_{n+\ell}$ gives  
$$
[x_2 , e_k e_{n+\ell}]=\alpha_k e_k e_{n+\ell} + \alpha_{\ell} e_k e_{n+\ell}=(\alpha_k + \alpha_{\ell}) e_k e_{n+\ell}.
$$
We cannot yet conclude that $\alpha_k =\alpha_{\ell}$ since commutators of $e_k e_{n+\ell}$ with degree 4 terms in $x$ may interfere (the list of commutators below also shows that no commutator with a higher degree term produces a degree 2 term). So we look at what happens to $x_4$, the degree 4 part of $x$. We may assume that $k< \ell$. 
Of the degree 4 terms of $x$, the only term whose commutator with $e_k e_{n+\ell}$ produces degree 2 terms is $\alpha_{k\ell} e_k e_{\ell} e_{n+k} e_{n+\ell}$. In fact,
$$
\left[ e_k e_{\ell} e_{n+k}e_{n+\ell} , e_k e_{n+\ell} \right] =
a_{n+\ell}\cdot  e_k e_{\ell} + a_k\cdot e_{n+k}e_{n+\ell} .
$$
This is the last case in the list
$$
[e_i e_j e_{n+i} e_{n+j},e_k e_{n+\ell}]=
\left\{ 
\begin{array}{ll}
0 & \text{if } i\neq k, \ell, j\neq k,\ell\\
e_i e_k e_{n+i} e_{n+\ell}   & \text{if } i<k, j=k\\
e_k e_j e_{n+\ell}e_{n+j} & \text{if } i=k, j\neq \ell\\
 e_k e_j e_{n+\ell} e_{n+j} & \text{if } i=\ell, j> \ell\\
e_k e_i e_{n+i} e_{n+\ell} & \text{if } i\neq k, j=\ell\\
a_{n+\ell} \cdot e_k e_{\ell} + a_k\cdot e_{n+k} e_{n+\ell} & \text{if } i=k, j=\ell
\end{array} 
\right.
$$
We check the various relations. For $i=k, j=\ell$
$$
\begin{array}{ll}
e_k e_{\ell} e_{n+k}e_{n+\ell} \cdot e_k e_{n+\ell} & = 
e_k (e_{n+\ell} e_{\ell} +1) (e_k e_{n+k} +1) e_{n+\ell} \\
& = e_k e_{n+\ell} \cdot e_k e_{\ell} e_{n+k}e_{n+\ell} + e_k e_{n+\ell}e_{\ell} e_{n+\ell} + a_k\cdot e_{n+k} e_{n+\ell} + e_k e_{n+\ell}\\
&=e_k e_{n+\ell} \cdot e_k e_{\ell} e_{n+k}e_{n+\ell} + e_k (e_{\ell} e_{n+\ell} +1) e_{n+\ell} + a_k\cdot e_{n+k} e_{n+\ell} + e_k e_{n+\ell}\\
& =e_k e_{n+\ell} \cdot e_k e_{\ell} e_{n+k}e_{n+\ell} + a_{n+\ell} e_k e_{\ell} + e_k e_{n+\ell} + a_k\cdot e_{n+k} e_{n+\ell} + e_k e_{n+\ell}\\
& = e_k e_{n+\ell} \cdot e_k e_{\ell} e_{n+k}e_{n+\ell} + a_{n+\ell} e_k e_{\ell} + a_k\cdot e_{n+k} e_{n+\ell}.
\end{array}
$$
For $i<k$, $j=k$ we have
$$
\begin{array}{ll}
e_i e_k e_{n+i} e_{n+k} \cdot e_k e_{n+\ell} & = e_i e_{n+i} e_k (e_k e_{n+k} +1) e_{n+\ell} \\
& = e_i e_{n+i} e_k^2 e_{n+k} e_{n+\ell} + e_i e_{n+i} e_k e_{n+\ell} \\
& = e_k e_{n+\ell}\cdot e_i e_k e_{n+i}e_{n+k} + e_i e_k e_{n+i} e_{n+\ell}.
\end{array}
$$
The remaining cases where $|\{ i,j\} \cap \{ k,\ell\} |=1$ follow from a similar computation. 

So only $x_4$ and $x_2$ contribute to the degree 2 part of $\left[ x, e_k e_{n+\ell}\right]$ yielding: 
\begin{equation}\label{degree2}
(\alpha_k + \alpha_{\ell}) e_k e_{n+\ell} + \alpha_{k\ell} (a_{n+\ell}\cdot e_k e_{\ell} + a_k\cdot e_{n+k} e_{n+\ell}).
\end{equation}
Hence $\alpha_k = \alpha_{\ell}$ for $k< \ell$. So $x_2$ is a multiple of $\bz=e_1 e_{n+1} + \cdots + e_n e_{2n}$. 

Note that we cannot yet conclude that the coefficient $\alpha_{k\ell}$ is zero (see equation \ref{degree2}), since $a_k =q(e_k)$ and $a_{n+\ell}=q(e_{n+\ell})$ may both be zero.

\medskip\noindent
\textbf{Step 3: The coefficients in degree 4 are zero}.\\
A degree 4 term in $x$ is of the form $\alpha_{ij} e_i e_j e_{n+i} e_{n+j}$ with $1\leq i<j \leq n$. Here we consider commutators with $e_k e_{\ell}$ (where $1\leq k < \ell \leq n$). Such commutators only produce degree 2 terms if $i=k$ and $j=\ell$:
$$
\left[ e_i e_j e_{n+i} e_{n+j}, e_k e_{\ell} \right] =
\left\{ 
\begin{array}{ll}
0 & \text{if } \{ i,j\} \cap \{ k,\ell \} =\emptyset\\
e_i e_k e_{\ell} e_{n+i} & \text{if } i< j=k\\
e_k e_j e_{\ell} e_{n+j}\text{ or } e_k e_{\ell} e_j e_{n+j} & \text{if } i=k, j\neq \ell\\
e_k e_{\ell} e_j e_{n+j}  & \text{if } i=\ell , j>\ell\\
a_k\cdot e_{\ell} e_{n+k} + a_{\ell}\cdot e_k e_{n+\ell} + e_k e_{\ell} & \text{if } i=k, j=\ell
\end{array}
\right.
$$
Using $e_m e_{n+m} \cdot e_m = e_m (e_m e_{n+m}+1) = e_m \cdot e_m e_{n+m} + e_m$, the last equality follows from 
$$
\begin{array}{ll}
e_k e_{\ell}e_{n+k} e_{n+\ell}\cdot e_k e_{\ell} & = (e_k \cdot e_k e_{n+k}+e_k)(e_{\ell}\cdot e_{\ell}e_{n+\ell} + e_{\ell})\\
& = e_k e_{\ell}\cdot e_k e_{\ell} e_{n+k}e_{n+\ell} + a_k\cdot e_{\ell} e_{n+k} + a_{\ell}\cdot e_k e_{n+\ell} + e_k e_{\ell}.
\end{array}
$$
If $i=k$, $j\neq \ell$, we have
$$
\begin{array}{ll}
e_k e_j e_{n+k} e_{n+j}\cdot e_k e_{\ell} & = (e_k e_k e_{n+k}+e_k) e_{\ell} e_j e_{n+j} \\
& = e_k e_{\ell} \cdot e_k e_j e_{n+k}e_{n+j} + e_k e_j e_{\ell} e_{n+j} .
\end{array}
$$
The other cases with $|\{ i,j\} \cap \{ k.\ell\}|=1$ are similar.

So only the commutator of $e_k e_{\ell}$ with the degree 4 term $\alpha_{k\ell} e_k e_{\ell} e_{n+k} e_{n+\ell}$ produces degree 2 terms: $\alpha_{k\ell}(a_k\cdot e_{\ell} e_{n+k} + a_{\ell}\cdot e_k e_{n+\ell} + e_k e_{\ell})$. From $\alpha_{k\ell} e_k e_{\ell} =0$ we infer that $\alpha_{k\ell}=0$.   

\medskip
\noindent
\textbf{Step 4: The coefficients in degrees 6 and higher are zero}.\\
A computation as in the degree 4 case shows that a commutator with $e_k e_{\ell}$ of a term $\alpha_{IJ} e_I e_J$ in degree 6 produces a term in degree 4 precisely if $k,\ell \in I$. This implies that all coefficients $\alpha_I$ with $k,\ell \in I$ are $0$. Varying $k$ and $\ell$ we obtain $x_6 =0$. 

A similar reasoning shows that all terms in degrees $2m$, $m\geq 3$, are zero.

\bigskip
\noindent
\textbf{\emph{Proof of c.}}
 This follows from 
 $$
 \begin{array}{ll}
 \bz^2+\bz & =(e_1 e_{n+1} + \cdots + e_n e_{2n})^2 + e_1 e_{n+1} + \cdots + e_n e_{2n}\\
 & = (e_1 e_{n+1})^2 + \cdots + (e_n e_{2n})^2 + e_1 e_{n+1} + \cdots + e_n e_{2n} \\
 & = e_1 (e_1 e_{n+1} +1)e_{n+1} + \cdots + e_n (e_n e_{2n}+1)e_{2n} + e_1 e_{n+1} + \cdots + e_n e_{2n}\\
 & = a_1 a_{n+1} + \cdots +a_n a_{2n}.
 \end{array}
 $$

\bibliographystyle{acm}

 \end{document}